\begin{document}

\newtheorem{thm}{Theorem}
\newtheorem{lm}{Lemma}
\newtheorem{cor}{Corollary}
\newtheorem{prop}{Proposition}
\newtheorem*{notation}{Notation}
\newtheorem{rem}[thm]{Remark}
\newtheorem{Claim}[thm]{Claim}
\newtheorem{fact}{Fact}
\theoremstyle{definition}
\newtheorem{df}{Definition}
\newtheorem{exam}{Example}

\author{N.~L.~Polyakov, HSE University}
\title{A note on the Canonical Ramsey Theorem and  Ramsey ultrafilters}
\date{}
\maketitle
\begin{abstract}  We give a characterizations of Ramsey ultrafilters on $\mathscr P(\omega)$ in terms of functions $f:\omega^n\to\omega$ and their ultrafilter extensions. To do this, we prove that for any partition $\mathcal P$ of $[\omega]^n$ there is a finite partition~$\mathcal Q$ of $[\omega]^{2n}$ such that any set $X\subseteq \omega$ that is homogeneous for $\mathcal Q$ is a finite union of sets that are canonical for $\mathcal P$.
\end{abstract}

Ramsey theory studies homogeneous (monochromatic) substructures for various partitions (colorings) of structures. In  Canonical Ramsey Theory, a more general notion of a canonical substructure is considered. A nonprincipal ultrafilter on $\mathscr P(\omega)$ that contains an homogeneous set for any finite partition $\mathcal P$ of $[\omega]^n$, $1\leq n<\omega$, is called a Ramsey ultrafilter. It is well known that an ultrafilter $\mathfrak u$ on $\mathscr P(\omega)$ is a Ramsey ultrafilter if and only if it is selective and if and only if it is minimal (with respect to the  Rudin-Keisler (pre)order), see, for example, \cite{Comfort Negrepontis}. Both of these characterizations are formulated in terms of functions $f:\omega\to\omega$ or their ultrafilter extensions. We show that a nonprincipal ultrafilter on $\mathscr P(\omega)$ is a Ramsey ultrafilter if and only if it contains a canonical set for each  partition of $[\omega]^n$, and we give characterizations of Ramsey ultrafilters  similar to the above in terms of functions on $\omega$ of arbitrary finite arity and the concept of their ultrafilter extensions, which appeared in recent works \cite{Goranko, Saveliev}. To do this, we prove that for any partition $\mathcal P$ of $[\omega]^n$, $1\leq n<\omega$, there is a finite partition~$\mathcal Q$ of $[\omega]^{2n}$ such that any set $X\subseteq \omega$ that is homogeneous for $\mathcal Q$ is a finite union of sets that are canonical for $\mathcal P$. This theorem provides an easy way to go from some results of Ramsey Theory to similar results of Canonical Ramsey Theory.

\subsection*{1. Combinatorial Theorem}

\par Everywhere below, we identify natural numbers and finite ordinals.  The set of all finite ordinals is denoted by $\omega$. We use the fact that each ordinal is a set of ordinals that precede~it. For example, for any $X\subseteq \omega$ and $x\in \omega$ the term $x\cap X$ denotes the set $\{y\in X: y<x\}$.

\par For any set $X$ and $n\in\omega$ the set of all $n$-element subsets of $X$ is denoted by~$[X]^n$:
$$
[X]^n=\{\bm x\subseteq X: |\bm x|=n\}.
$$
A set $\mathcal P\subseteq \mathscr P(Z)$ is called a \textit{partition} of a set $Z$ if 
\begin{enumerate}
\item $\bigcup \mathcal P=Z$, and
\item $(\forall X, Y\in \mathcal P)\, X\cap Y= \emptyset \vee X=Y$.
\end{enumerate}
For convenience, we assume that one of the elements of the partition can be empty\footnote{Many results in Ramsey theory are formulated in terms of colorings of sets. Partition terminology and coloring terminology can be used interchangeably. A \textit{coloring} of a set $Z$ is any function $f: Z\to C$ for some set $C$ (of colors). Each coloring of $Z$ defines a partition $\mathcal P_f=\{f^{-1}(c): c\in C\}$, and each partition $\mathcal P$ of $Z$ defines a coloring $f_\mathcal P: Z\to \mathcal P$ with $z\in f(z)$. The maps $\mathcal P\mapsto f_\mathcal P$ and $f\mapsto \mathcal P_f$ are mutually inverse.}. 
\begin{df}
For any partition $\mathcal P$ of $[X]^n$, a set $Y\subseteq X$ is called \emph{homogeneous} for $\mathcal P$ if there is a set $P\in \mathcal P$ such that $[Y]^n\subseteq P$. 
\end{df}
For $n\in \omega$,  let $\mathrm{RT}\,(n)$ assert that for any finite partition $\mathcal P$ of $[\omega]^n$ there exists an infinite set $Y\subseteq \omega$ that is homogeneous for $\mathcal P$. The (infinite) \textit{Ramsey Theorem} (RT) \cite{Ramsey}  states that $\mathrm{RT}\,(n)$ holds for every $n$, $1\leq n<\omega$. The Ramsey Theorem gave rise to an extensive branch of mathematics, which is called \textit{Ramsey Theory}, see, eg.~\cite{Graham2015}. Many different versions and extensions of the Ramsey Theorem can be found in  publications. 
\par The Canonical Ramsey Theorem of Erd\H{o}s and Rado \cite{ErdosRado1950} is a natural generalization of  the Ramsey Theorem to the case of arbitrary partitions of $[\omega]^n$.  
\par  For any partition $\mathcal P$ of a set $Z$, the corresponding equivalence relation is denoted by~$\approx_\mathcal P$:
$$
x\approx_\mathcal P y\,\Leftrightarrow\, (\exists P\in \mathcal P)\, x, y\in P
$$
for all $x,y\in Z$.

\par For any $X\subseteq \omega$ and $i<|X|$, the $i$-th (in the natural ordering) element $x\in X$ is denoted by $X_{[i]}$:
$$
x=X_{[i]}\,\Leftrightarrow\, (x\in X\wedge |x\cap X|=i).
$$

\begin{df} Let $\mathcal P$ be a partition of $[\omega]^n$, $1\leq n<\omega$, and $I\subseteq n$. A set $X\subseteq \omega$ is called $I$-\emph{canonical} for $\mathcal P$   if 
$$
\bm p \approx_\mathcal P \bm q\,\Leftrightarrow\, \bigwedge\limits_{i\in
I}(\bm p_{[i]}=\bm q_{[i]})
$$
for all $\bm p, \bm q\in [X]^n$. A set $X\subseteq \omega$ is called \textit{canonical} for $\mathcal P$  if there 
is  a set $I\subseteq n$ such that~$X$ is $I$-\emph{canonical} for $\mathcal P$.
\end{df}

\par For $n\in \omega$,  let $\mathrm{CRT}\,(n)$ assert that for any partition $\mathcal P$ of $[\omega]^n$ there exists an infinite set $Y\subseteq \omega$ that is canonical for $\mathcal P$.  The \textit{Canonical Ramsey Theore}m (CRT) affirms that $\mathrm{CRT}\,(n)$ holds for every $n$, $1\leq n<\omega$. 

\par As usual, the empty conjunction is true, so any
$\emptyset$-canonical set $X$ is homogeneous (for any partition $\mathcal P$
of $[\omega]^n$).  Any infinite canonical set $X\subseteq \omega$ for a finite partition $\mathcal P$ of $[\omega]^n$ is $\emptyset$-canonical for $\mathcal P$. Therefore, RT immediately follows from CRT. 

\par Several proofs  of CRT appeared in print: the original proof of Erd\H{o}s and Rado \cite{ErdosRado1950}, the simpler version of Rado \cite{Rado1986}, the proof of Mileti \cite{Mileti2008}, etc. (see also \cite{ErdosRado1952,Lefmann1995} for the finite version). The recent work of Matet \cite{Matet2016} contains an elegant proof using the antilexicographic ordering on $[\omega]^n$. 

\par The proofs in  \cite{ErdosRado1950} and \cite{Rado1986} use the following strategy. To prove $\mathrm{CRT}\,(n)$, given a partition $\mathcal P$ of  $[\omega]^n$, the authors construct a special finite partition $\mathcal Q$ of $[\omega]^{2n}$, and use $\mathrm{RT}\,(2n)$. However, as far as we know, a relevant correspondence between partitions of $[\omega]^n$ and finite partitions of $[\omega]^{2n}$ is nowhere stated explicitly. We prove that the following statement holds. 
\begin{thm}\label{main_var} For any natural number $n\geq 1$ and partition $\mathcal P$ of $[\omega]^n$ there is a finite partition~$\mathcal Q$ of $[\omega]^{2n}$ such that any set $X\subseteq \omega$ that is homogeneous for $\mathcal Q$ is a finite union of sets that are canonical for $\mathcal P$.
\end{thm}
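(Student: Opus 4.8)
The plan is to make explicit the construction underlying the proofs of Erd\H{o}s--Rado and of Rado, in which the color of a $2n$-set records how its $n$-element subsets are related by $\approx_\mathcal P$. Precisely: for $s\in[\omega]^{2n}$ write $s=\{s_{[0]}<\dots<s_{[2n-1]}\}$, and for $A\in[2n]^n$ put $s(A)=\{s_{[i]}:i\in A\}\in[\omega]^n$; let the color of $s$ be the equivalence relation $E_s$ on the finite set $[2n]^n$ defined by $A\mathrel{E_s}B\iff s(A)\approx_\mathcal P s(B)$. There are only finitely many equivalence relations on $[2n]^n$, so grouping $2n$-sets by color yields a finite partition $\mathcal Q$ of $[\omega]^{2n}$ (one of whose blocks may be empty). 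The reason for passing from $[\omega]^n$ to $[\omega]^{2n}$ is that $|\bm p\cup\bm q|\le 2n$ for all $\bm p,\bm q\in[\omega]^n$: a single $2n$-set can exhibit $\bm p$, $\bm q$ and every $n$-subset of $\bm p\cup\bm q$ at once, so $E_s$ determines $\approx_\mathcal P$ on all of $[s]^n$.

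Suppose now $X$ is homogeneous for $\mathcal Q$. If $X$ is finite it is a finite union of singletons, each canonical for $\mathcal P$ (trivially, since $[\{x\}]^n$ has at most one element), so assume $X$ is infinite, and let $E$ be the color common to all $s\in[X]^{2n}$. Call $i<n$ \emph{essential} if some two members of $[X]^n$ agreeing in every coordinate other than the $i$-th are not $\approx_\mathcal P$-related, and set $I=\{i<n:i\text{ essential}\}$; I aim to show $X$ is $I$-canonical. Two facts are needed. (i) \emph{Uniformity}: for fixed $i$, all pairs in $[X]^n$ that differ only in coordinate $i$ behave alike under $\approx_\mathcal P$. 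Indeed, the union of such a pair has $n+1$ elements, and padding it with $n-1$ further elements of $X$ chosen above it gives a $2n$-set in which the two sets occupy the positions $\{0,\dots,n\}\setminus\{i\}$ and $\{0,\dots,n\}\setminus\{i+1\}$, regardless of the actual values; so whether they are $\approx_\mathcal P$-related is dictated by $E$ alone, and in particular replacing a value at an inessential coordinate preserves the $\approx_\mathcal P$-class. (ii) \emph{Connectivity}: the graph on $[2n]^n$ with an edge between any two sets differing in exactly one coordinate is connected, and after deleting those edges that change an essential coordinate, two sets lie in the same component iff they agree at every essential coordinate. Granting these, the implication $\bigwedge_{i\in I}(\bm p_{[i]}=\bm q_{[i]})\Rightarrow\bm p\approx_\mathcal P\bm q$ is immediate: choosing $s\in[X]^{2n}$ with $\bm p\cup\bm q\subseteq s$, the patterns of $\bm p$ and $\bm q$ in $s$ agree at the positions in $I$, hence are joined in $[2n]^n$ by a path each of whose edges changes an inessential coordinate; transporting this path to $[s]^n$ and using (i) and transitivity of $\approx_\mathcal P$ yields $\bm p\approx_\mathcal P\bm q$.

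The converse --- that $\bm p\approx_\mathcal P\bm q$ forces $\bm p$ and $\bm q$ to agree at every essential coordinate --- is where the real work lies, and I expect it to be the main obstacle. Here one must use not merely the color $E$ but the fact that $\approx_\mathcal P$ is an equivalence relation on all of $[\omega]^n$, together with the freedom --- available because $X$ is infinite --- to enlarge or shift a finite configuration inside $X$: the goal is to show that a putative $\approx_\mathcal P$-relation between two $n$-sets that disagree at an essential coordinate would, through transitivity and the equivalences already established, force a $\approx_\mathcal P$-relation between two $n$-sets differing in a \emph{single} essential coordinate, contradicting uniformity. The delicate part is the bookkeeping in this propagation, in particular keeping every auxiliary $n$-set a genuine strictly increasing tuple drawn from $X$; the remaining ingredients (well-definedness of $E_s$, finiteness of $\mathcal Q$, the connectivity fact, and the finite-$X$ case) are routine. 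Finally, this argument actually yields that every infinite homogeneous $X$ is itself canonical for $\mathcal P$, so the ``finite union of canonical sets'' in the theorem is just the convenient uniform phrasing that also subsumes the finite case.
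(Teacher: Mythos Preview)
Your partition $\mathcal Q$ coincides with the paper's $\mathcal P^\ast$, and your uniformity and connectivity observations are correct. The genuine gap is exactly where you place it: the converse $\bm p\approx_\mathcal P\bm q\Rightarrow\bigwedge_{i\in I}\bm p_{[i]}=\bm q_{[i]}$. You outline a plan---propagate the relation by transitivity until it forces a $\approx_\mathcal P$-equivalence between two $n$-sets differing at a single essential coordinate---but carry none of it out, and this is not bookkeeping: it is where essentially all the content of the theorem lies. The paper does not follow your route here. It defines its index set in the dual way, $I(Q)=\bigcap_{Q\subseteq Q_{\bm p\bm q}}\{i:\bm p_{[i]}=\bm q_{[i]}\}$; with this choice the implication ``related $\Rightarrow$ agree on $I(Q)$'' falls out of the definition (Facts~1--2), and the work shifts to the implication ``agree on $I(Q)\Rightarrow$ related,'' which the paper establishes only under a sparsity hypothesis via an elaborate mechanism of $t$-cascades and local shift lemmas (Facts~3--10, Lemma~1). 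Neither that machinery nor any substitute for your missing converse appears in your sketch.

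Consequently your final remark misreads the statement. The paper does \emph{not} show that an infinite homogeneous $X$ is itself canonical: Lemma~1 yields the canonical biconditional only for $n$-subsets of $X$ of sparsity at least $m=n^{n\binom{2n}{n}(\binom{2n}{n}-1)}$, and the argument then genuinely decomposes $X$ into one finite block of size $\le m$ together with $m$ infinite $m$-sparse blocks, each $I(Q)$-canonical. The ``finite union'' is thus not a cosmetic device to absorb finite $X$ but the actual output of the method. Your stronger assertion happens to be true---it is essentially Rado's argument---but it stands or falls with the converse you have not supplied.
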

\par CRT immediately follows from RT and Theorem \ref{main_var}. Thus, Theorem \ref{main_var} provides one more proof of CRT. The proof of Theorem \ref{main_var} is quite elementary and does not use RT. Therefore, informally speaking, we divide CRT into a Ramseyan and a non-Ramseyan parts. This approach is particularly useful in the theory of ultrafilters, see Section 2. 

\par We obtain Theorem \ref{main_var} as a formal logical consequence of the following combinatorial fact, which provides more information and can be of independent value.  
\par Let $\mathcal P$ be a partition of $[\omega]^n$, $n\geq 1$. For any $\bm p, \bm q\in [2n]^n$, let
$$
Q_{\bm p \bm q}= \left\{\bm z\in [\omega]^{2n}: \left\{\bm
z_{[i]}:i\in \bm p\right\}\approx_\mathcal P \left\{\bm z_{[i]}:i\in
\bm q\right\}\right\}.
$$
Let $\mathcal P^\ast$ be the set of all atoms of the (finite) field of
sets $\mathcal A$ generated by all sets $Q_{\bm p \bm q} $, \mbox {$
\bm p, \bm q \in [2n]^n $}, i.e., $Q\in \mathcal P^\ast$ iff $Q$ is
a non-empty subset of $[\omega]^{2n}$ representable in the form
$$
\bigcap\limits_{\bm p, \bm q \in [2n]^n}\!\!\!\!S_{\bm p \bm q}
$$
where $S_{\bm p \bm q}=Q_{\bm p \bm q}$ or
$S_{\bm p \bm q}=[\omega]^{2n}\setminus Q_{\bm p \bm q}$ for any $\bm p, \bm q \in
[2n]^n$. 
\par Obviously, $\mathcal P^\ast$ is a finite partition of $[\omega]^{2n}$ of cardinality at most
$2^{\frac{1}{2}\binom{2n}{n}\left(\binom{2n}{n}-1\right)}$ where $\binom{2n}{n}$ is a binomial coefficient, $\binom{2n}{n}=\dfrac{(2n)!}{(n!)^2}$.

\begin{thm}\label{main} For any natural number $n\geq 1$ there is a natural number~$m$ (we can put $m=n^{n\binom{2n}{n}\left(\binom{2n}{n}-1\right)}$) such that for any partition $\mathcal P$ of $[\omega]^n$ and any set $Q\in \mathcal P^\ast$ there is a set $I\subseteq n$ such that for any infinite set $X\subseteq \omega$ with $[X]^{2n}\subseteq Q$ there is a partition $\mathcal R=\{R_0, R_1, \ldots, R_m\}$ of~$X$ such that
\begin{enumerate}
\item the set $R_0$ is finite of cardinality at most $m$,
\item for any $i$, $1\leq i\leq m$, the set $R_i$ is an infinite $I$-canonical set for $\mathcal P$.
\end{enumerate}
\end{thm}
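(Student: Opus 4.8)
The plan is to reduce Theorem~\ref{main} to an assertion about equivalence relations on $[X]^n$ that are \emph{order-invariant} (depend only on the order type of pairs), and then to analyse such relations. Fix $Q\in\mathcal P^\ast$ and an infinite $X\subseteq\omega$ with $[X]^{2n}\subseteq Q$. Given $\bm a,\bm b\in[X]^n$, I would extend $\bm a\cup\bm b$ (at most $2n$ elements) to some $\bm z\in[X]^{2n}$ and set $\bm p=\{i<2n:\bm z_{[i]}\in\bm a\}$, $\bm q=\{i<2n:\bm z_{[i]}\in\bm b\}$, so $\bm p,\bm q\in[2n]^n$ and $\{\bm z_{[i]}:i\in\bm p\}=\bm a$, $\{\bm z_{[i]}:i\in\bm q\}=\bm b$. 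Then $\bm z\in Q_{\bm p\bm q}$ iff $\bm a\approx_{\mathcal P}\bm b$; since $\bm z\in Q$ and $Q$ is an atom of the field generated by the $Q_{\bm p'\bm q'}$, also $\bm z\in Q_{\bm p\bm q}$ iff $Q\subseteq Q_{\bm p\bm q}$, whence $\bm a\approx_{\mathcal P}\bm b\Leftrightarrow Q\subseteq Q_{\bm p\bm q}$. As $\bm z$ ranges over the $2n$-element supersets of $\bm a\cup\bm b$ inside $X$, the pair $(\bm p,\bm q)$ ranges over all interleavings of the $2n-|\bm a\cup\bm b|$ extra points into $\bm a\cup\bm b$, a set depending only on the order type of $(\bm a,\bm b)$; as the displayed truth value is independent of the choice of $\bm z$, whether $\bm a\approx_{\mathcal P}\bm b$ for $\bm a,\bm b\in[X]^n$ depends only on the order type of $(\bm a,\bm b)$, and this dependence is the same for every infinite $X$ with $[X]^{2n}\subseteq Q$ --- it is read off from $Q$ alone.

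\textit{The set $I$ and the easy inclusion.} Call $(\bm c,\bm d)\in[X]^n\times[X]^n$ an $i$-\emph{move} if $\bm c_{[j]}=\bm d_{[j]}$ for $j\ne i$ and $\bm c_{[i]}\ne\bm d_{[i]}$. By the previous step and symmetry of $\approx_{\mathcal P}$, whether the two sets of an $i$-move are $\approx_{\mathcal P}$-equivalent depends only on $i$; let $I$ be the set of those $i<n$ for which some $i$-move joins inequivalent sets, so $I$ depends only on $Q$. I would then prove: if $\bm a,\bm b\in[X]^n$ satisfy $\bm a_{[i]}=\bm b_{[i]}$ for all $i\in I$, then $\bm a\approx_{\mathcal P}\bm b$. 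The point is that the common ``anchors'' $\{\bm a_{[i]}:i\in I\}$ cut $X$ into the same blocks for $\bm a$ and for $\bm b$; in each block both tuples have equally many non-anchor coordinates, so $\bm a$ can be transformed into $\bm b$ by finitely many $i$-moves with $i\notin I$ performed inside $X$ --- each preserving the $\approx_{\mathcal P}$-class by the definition of $I$ --- since in any block the non-anchor coordinates are either rigidly determined (forcing agreement) or have a free slot to slide through.

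\textit{The hard inclusion.} It remains to show that $\bm a\approx_{\mathcal P}\bm b$ forces $\bm a_{[i]}=\bm b_{[i]}$ for all $i\in I$; equivalently, that no $\approx_{\mathcal P}$-class contains $n$-sets with different $I$-coordinate patterns. The plan is to pass to canonical representatives (each non-anchor coordinate pushed as low as possible, legitimate by the previous step) and then, by induction on the number of coordinates in $I$ at which the two representatives disagree, to distil from $\bm a\approx_{\mathcal P}\bm b$ --- using transitivity of $\approx_{\mathcal P}$ together with order-invariance --- a single $i$-move with $i\in I$ joining equivalent sets, contradicting the choice of $I$. Following Erd\H{o}s--Rado and Rado, this bookkeeping can be run pair by pair over the $\binom{2n}{n}(\binom{2n}{n}-1)$ ordered pairs of distinct members of $[2n]^n$, refining the partition of $X$ into at most $n^n$ pieces at each step, which accounts for the stated bound $m=n^{n\binom{2n}{n}(\binom{2n}{n}-1)}$. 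I expect this separation step to be the main obstacle.

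\textit{Assembling the partition.} By the two inclusions, after removing a finite set $R_0$ if necessary, $X$ is $I$-canonical for $\mathcal P$; since every subset of an $I$-canonical set is $I$-canonical, I would split the infinite set $X\setminus R_0$ into $m$ infinite pieces $R_1,\dots,R_m$, each $I$-canonical for $\mathcal P$, obtaining the desired partition $\mathcal R=\{R_0,R_1,\dots,R_m\}$. A sharp form of the hard inclusion would even give $R_0=\emptyset$ and allow $m=1$; the larger bound is only what the explicit iterative argument yields.
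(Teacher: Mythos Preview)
Your reduction to an order-invariant equivalence relation on $[X]^n$ is correct and matches the paper's Fact~1. But your choice of $I$ is \emph{dual} to the paper's. The paper sets $I(Q)=\bigcap_{Q\subseteq Q_{\bm p\bm q}}\{i<n:\bm p_{[i]}=\bm q_{[i]}\}$, which makes $\bm a\approx_{\mathcal P}\bm b\Rightarrow(\forall i\in I(Q))\,\bm a_{[i]}=\bm b_{[i]}$ \emph{trivial by definition} (Facts~1--2), and puts all the work into the converse. With your $i$-move definition one only knows $I(Q)\subseteq I$ a priori, so your ``easy inclusion'' is weaker than the paper's hard direction, while your ``hard inclusion'' is stronger than the paper's trivial one. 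The two $I$'s coincide only \emph{after} the theorem is proved.

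The genuine gap is therefore your hard inclusion. The phrase ``distil a single $i$-move with $i\in I$ joining equivalent sets'' names the goal but gives no mechanism, and the picture of ``refining $X$ into at most $n^n$ pieces at each step'' is not what actually happens. In the paper the corresponding work (Facts~3--12) hinges on a \emph{sparsity} parameter $d(\bm x)$, the minimum gap in the enumeration of $X$ between consecutive elements of $\bm x\cup\{\min X\}$. One shows that if $d(\bm x)\geq m$ then a controlled ``cascade'' $\bm x=\bm x_0,\bm x_1,\ldots,\bm x_l$ can be built inside $X$ with $\bm x_k\approx_{\mathcal P}\bm x_{k+1}$ and $\bm x_0\cap\bm x_l=\{(\bm x_0)_{[i]}:i\in I(Q)\}$; this is what lets one shed a single non-$I(Q)$ coordinate (Fact~12). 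The number $m=n^{n\binom{2n}{n}(\binom{2n}{n}-1)}$ is the sparsity needed for the cascade to fit, not a product of refinement factors, and the final partition takes $R_0$ to be the first $m$ elements of $X$ and $R_1,\ldots,R_m$ to be arithmetic progressions of step $m$ in the enumeration of $X$ --- so that every $\bm x\in[R_j]^n$ automatically has $d(\bm x)\geq m$.

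Your closing remark is correct, though: once the paper's Lemma is available for sparse tuples, your order-invariance argument immediately upgrades it to all of $[X]^n$, giving $I=I(Q)$ and showing that $X$ itself is $I$-canonical (so $R_0=\varnothing$ and $m=1$ would do). But that is a corollary of the sparsity argument, not a way around it.
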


\begin{proof}

Let $n=1$. Put $m=1$. For any partition $\mathcal P$ of $[\omega]^1$ the partition $\mathcal P^\ast$ contains at most two sets:
$$
Q_0=\left\{\bm x\in [\omega]^2: \left\{\bm
x_{[0]}\right\}\approx_\mathcal P \left\{\bm
x_{[1]}\right\}\right\}\text{ and }Q_1=\left\{\bm x\in [\omega]^2: \left\{\bm
x_{[0]}\right\}\not\approx_\mathcal P \left\{\bm
x_{[1]}\right\}\right\}.
$$
Let $I_0=\emptyset$ and $I_1=\{0\}$. If $[X]^{2n}\subseteq Q_i$ then $X$ is $I_i$-canonical for $\mathcal P$, $i\in \{0, 1\}$. It remains to put $R_0=\emptyset$ and $R_1=X$.

\par Further we assume $ n \geq 2 $. Put $m=n^{n\binom{2n}{n}\left(\binom{2n}{n}-1\right)}$. Fix an arbitrary set $Q \in \mathcal P^\ast$. For any $\bm p, \bm q\in [2n]^n$ denote
$ I_{\bm p \bm q}=\{i<n: \bm p_{[i]}=\bm q_{[i]}\}$ and $
I(Q)=\bigcap\limits_{Q\subseteq Q_{\bm p \bm q}}I_{\bm p \bm q}.$ We shall prove that $I=I(Q)$ is required.

\par Fix a set $X \subseteq \omega$ such that $[X]^{2n}\subseteq Q$.  Denote $\min(X)=e$,  $X^-=X\setminus \{e\}$, and  $\bm x^+=\bm x\cup \{e\}$ for any set $\bm x\subseteq X$. For any $x,y\in X$ denote $\rho(x,y)=|(x\triangle y)\cap X|$. The function $\rho$ is a metric on $X$, and for all $x, y, z\in X$
$$
x\leq y\leq z\,\Rightarrow\, \rho(x, z)=\rho(x, y)+\rho(y, z).
$$
For any non-empty set $\bm x\subseteq X^-$, the natural number $$d(\bm x)=\min\left\{\rho(x,y): x,y\in \bm x^+, x\neq y\right\}$$ is called the \emph{sparsity} of $\bm x$. For definiteness, we can put $d(\emptyset)=\omega$.

\par Our immediate goal will be to prove the following Lemma.

\begin{lm}\label{mainlemma}
Let $\bm x, \bm y\in [X^-]^n$, and $\min(d\left(\bm x\right),d\left(\bm
y\right))\geq m$. Then $\bm x\approx_\mathcal P\bm y$ if and only if $\bm x_{[i]}=\bm
y_{[i]}$ for all $i\in I(Q)$.
\end{lm}

\begin{proof}

% First we introduce a number of technical concepts and prove some of their properties.
For each $\bm p, \bm q\in [2n]^n$ and $\bm x, \bm y\in [X]^n$ we write $\bm x\underset{\bm p \bm q}{\longrightarrow}\bm y$ if there is a set $\bm z\in [X]^{2n}$ such that $\{\bm z_{[i]}: i\in \bm p\}=\bm x$ and $\{\bm z_{[i]}: i\in \bm q\}=\bm y$. We write $\bm x\underset{Q}{\longleftrightarrow}\bm y$ if $\bm x\underset{\bm p \bm q}{\longrightarrow}\bm y$ for some $\bm p, \bm q\in [2n]^n$ for which $Q\subseteq Q_{\bm p\bm q}$. 
% Note that the relation $\underset{Q}{\longrightarrow}$ is symmetric and reflexive because $Q_{\bm p\bm q}=Q_{\bm q\bm p}$ ans $Q_{\bm p\bm p}=[\omega]^{2n}$ for all $\bm p, \bm q\in [2n]^n$. Furthermore, the following fact holds. 

\begin{fact}\label{Two_relations}
For all $\bm x, \bm y\in [X]^n$, $\bm x\underset{Q}{\longleftrightarrow}\bm y$ if and only if $\bm x\approx_\mathcal P \bm y
$.
\end{fact} 

\begin{proof} Let $\bm x, \bm y\in [X]^n$. If $\bm x\underset{Q}{\longleftrightarrow}\bm y$ there are sets $\bm p, \bm q\in [2n]^n$ and $\bm z\in [X]^{2n}$ such that $Q\subseteq Q_{\bm p\bm q}$, $\{\bm z_{[i]}: i\in \bm p\}=\bm x$ and $\{\bm z_{[i]}: i\in \bm q\}=\bm y$. Since $[X]^{2n}\subseteq Q$, we have:
$$
\bm x = \{\bm z_{[i]}: i\in \bm p\}\approx_\mathcal P \{\bm z_{[i]}: i\in \bm q\}=\bm y.
$$
Assume $\bm x\approx_\mathcal P \bm y$. Choose a set $\bm z\in [X]^{2n}$ such that $|\bm z|=2n$ and $\bm x\cup\bm y\subseteq \bm z$. Let $\bm p$ and $\bm q$ be the sets of numbers of elements of the sets $\bm x$ and $\bm y$, respectively, in the set $\bm z$:
$$
\bm p=\{|x\cap \bm z|: x\in \bm x\}\text{ and }\bm q=\{|y\cap \bm z|: y\in \bm y\}.
$$
In other words, $\{\bm z_{[i]}: i\in \bm p\}=\bm x$ and $\{\bm z_{[i]}: i\in \bm q\}=\bm y$. Suppose $Q\nsubseteq Q_{\bm p\bm q}$. Then, by the construction, $Q\subseteq [\omega]^{2n}\setminus Q_{\bm p\bm q}= \left\{\bm z\in [\omega]^{2n}: \left\{\bm
z_{[i]}:i\in \bm p\right\}\not\approx_\mathcal P \left\{\bm z_{[i]}:i\in
\bm q\right\}\right\}$. Since $[X]^{2n}\subseteq Q$, we have $\bm x\not\approx_\mathcal P\bm y$, a contradiction. Therefore,  $Q\subseteq Q_{\bm p\bm q}$.
\end{proof}
 
%We denote  the transitive closure of $\underset{Q}{\longrightarrow}$ by $\underset{Q}{\Longrightarrow}$, and  the reflexive, transitive and symmetric closure of  $\underset{Q}{\longrightarrow}$ by $\underset{Q}{\sim}$.
%\par Since $[X]^{2n}\subseteq Q$, we have that $\bm x\underset{Q}{\longrightarrow}\bm y$ implies  $\bm x\approx_\mathcal P \bm y$ for all $\bm x, \bm y\in [X]^n$. Therefore
%$$
%\bm x\underset{Q}{\sim}\bm y\,\Rightarrow\, \bm x\approx_\mathcal P \bm y.
%$$
Thus, it suffices to prove that  for
all $\bm x, \bm y\in [X^-]^n$ satisfying the conditions of
Lemma~\ref{mainlemma}, $\bm x\underset{Q}{\longleftrightarrow}\bm y$ if and only if $\bm x_{[i]}=\bm
y_{[i]}$ for all $i\in I(Q)$. In addition, we will use the fact that $\underset{Q}{\longleftrightarrow}$ is an equivalence relation. 

% \par For any $Y\subseteq \omega$ and $y\in Y$ the symbol
% $N_Y(y)$ denotes the sequence number of~$y$ in~$Y$ (w.r.t. the
% natural numbering): $ N_Y(y)=|y\cap Y| $. Thus, $Y_{[N_Y(y)]} = y$ for any $y\in Y$, and $N_Y(Y_{[i]})=i$ for any $i<|Y|$.

\begin{fact}\label{I_pq_stabl} Let $\bm x, \bm y\in [X]^n$, $\bm p, \bm q\in [2n]^n$, $\bm x\underset{\bm p\bm q}{\longrightarrow}\bm y$, and $i<n$. Then 
$$
i\in I_{\bm p\bm q}\,\Leftrightarrow\, \bm x_{[i]}=\bm y_{[i]}.
$$
\end{fact}
\begin{proof} By the condition, there is a set $\bm z\in [X]^{2n}$ such that $\bm x=\{\bm z_{\bm p_{[0]}}, \bm z_{\bm p_{[1]}}, \ldots, \bm z_{\bm p_{[n-1]}}\}$ and $\bm y=\{\bm z_{\bm q_{[0]}}, \bm z_{\bm q_{[1]}}, \ldots, \bm z_{\bm q_{[n-1]}}\}$. So, for any $i<n$,
$$
\bm x_{[i]}=\bm z_{[\bm p_{[i]}]}\text{ and }\bm y_{[i]}=\bm z_{[\bm q_{[i]}]}.
$$
Therefore,
$$
\bm x_{[i]}=\bm y_{[i]}\,\Leftrightarrow\, \bm z_{[\bm p_{[i]}]}=\bm z_{[\bm q_{[i]}]}\,\Leftrightarrow\, \bm p_{[i]}= \bm q_{[i]}\,\Leftrightarrow\, i\in I_{\bm p\bm q}.
$$
\end{proof}
From Facts \ref{Two_relations} and \ref{I_pq_stabl}, we have: for all $\bm x, \bm y\in [X]^n$, if  $\bm x\approx_\mathcal P\bm y$ then $\bm x_{[i]}=\bm y_{[i]}$ for all $i\in I(Q)$. Now we have to prove the reverse implication under the assumption $\bm x, \bm y\in [X^-]^n$ and $\min(d\left(\bm x\right), d\left(\bm
y\right))\geq m$. We start by proving that for any sufficiently sparse set $\bm x\in [X^-]^n$ there is a set $\bm y\in [X^-]^n$ such that  $\bm x\underset{Q}{\longleftrightarrow}\bm y$ and $\bm x\cap \bm y$ is exactly $\{\bm x_{[i]}: i\in I(Q)\}$.

\par For any finite sets $\bm x, \bm y\subseteq X^-$, let
$$
r(\bm x, \bm y)=\left\{
\begin{array}{cl}
0&\text{if $\bm y\subseteq \bm x$}\\
\max\limits_{y\in \bm y\setminus \bm x}\rho(\max(y\cap\bm x^+), y)&\text{otherwise},
\end{array}
\right.
$$
equivalently,
$$
r(\bm x, \bm y)=\max\limits_{y\in \bm y}\min\limits_{\begin{subarray}{l}x\in\bm x^+\\x\leq y\end{subarray}}\rho(x, y).
$$

Fix some of the simplest properties of the functions $d$ and $r$.

\begin{fact}\label{r_d_obv}
For all sets $\bm x, \bm y\subseteq X^-$,
$$
\bm x\subseteq \bm y\,\Rightarrow\, d(\bm x)\geq d(\bm y).
$$
For all finite sets $\bm x, \bm y, \bm z\subseteq X^-$,
$$\bm y\subseteq \bm z\,\Rightarrow\, r(\bm x, \bm y)\leq r(\bm x, \bm z).$$
\end{fact}
\begin{proof}
Immediately from the definitions. 
\end{proof}
Let us show that the function $r$ satisfies the \textit{triangle inequality}\footnote{However, the function $r$ is not a pseudo-metric since it is not symmetric.}. 
\begin{fact}\label{triangle} For all finite sets $\bm x, \bm y, \bm z\subseteq X^-$,
$$r(\bm x, \bm z)\leq r(\bm x, \bm y)+r(\bm y, \bm z).$$
\end{fact}

\begin{proof} If $\bm z\subseteq \bm x$, we have $r(\bm x, \bm z)=0$, the inequality holds. Consider the opposite case.  Let $c$ be an arbitrary
element of $\bm z\setminus \bm x$, and let $a=\max(c\cap\bm x^+)$. It suffices to prove that
$$
\rho(a,c)\leq r(\bm x, \bm y)+r(\bm y, \bm z).
$$
If $c\in\bm y$ we have: $\rho(a,c)\leq r(\bm x,
\bm y)\leq r(\bm x, \bm y)+r(\bm y, \bm z)$. Otherwise, denote 
$b=\max(c\cap\bm y^+)$. We have $\rho(b,c)\leq r(\bm y, \bm z)$. Consider the case $b\leq a$. Since $a<c$,  we have: $\rho(a,c)\leq\rho(b,c)\leq r(\bm y, \bm z)\leq
r(\bm x, \bm y)+r(\bm y, \bm z)$. Consider the opposite case. Suppose that $\bm x^+$ contains an element $x$ such that $a<x\leq b$. Since $b<c$, we have: $\max(c\cap\bm x^+)\geq x>a$, a contradiction. Therefore, $b\notin \bm x^+$ and  $a=\max(b\cap\bm x^+)$, which implies $\rho(a,b)\leq r(\bm x, \bm y)$. Consequently, $\rho(a,c)=\rho(a,b)+\rho(b,c)\leq r(\bm x, \bm
y)+r(\bm y, \bm z)$.
\end{proof}

\begin{fact}\label{existlocal} For any natural number $l\geq 1$, any set $\bm x\in [X^-]^n$ with the sparsity $d(\bm x)\geq n^{2l}$, and  any sets $\bm p, \bm q\in [2n]^n$ there is a set $\bm y\in [X^-]^n$ for which
\begin{enumerate}
\item $d(\bm y)\geq n^{2l-2}$,
\item $r(\bm x,\bm y)\leq n^{2l-1}$,
\item $\bm x\underset{\bm p\bm q}{\longrightarrow}\bm y$.
\end{enumerate}
\end{fact}

\begin{proof}
For each $i<n$, let $j_i=|\bm x_{[i]}\cap X|$. Thus, $\bm x_{[i]}=X_{[j_i]}$. For any $i\leq n$ define the set
$\bm z_i\subseteq X$ as follows:
\begin{align*}
&\bm z_0=\left\{X_{[kn^{2l-2}]}: 1\leq k\leq \bm p_{[0]}\right\},\\
&\bm z_i=\left\{X_{[j_{i-1}+kn^{2l-2}]}: 0\leq k\leq \bm p_{[i]}-\bm p_{[i-1]}-1\right\}\text{ for any $i\in \{1,2,\ldots, n-1\}$},\\
&\bm z_n=\left\{X_{[j_{n-1}+kn^{2l-2}]}: 0\leq k\leq 2n-1-\bm
p_{[n-1]}\right\}.
\end{align*}
\par Denote $\bm z=\bigcup\limits_{i\leq n}\bm z_i$. 
\par Note that for any $\bm p\in [2n]^n$ and $i<n$ we have
$$i\leq \bm p_{[i]}\leq n+i.$$ Consequently, $$\bm p_{[i]}-\bm p_{[i-1]}\leq n+1$$ for all $0<i<n$. Therefore, for all $i$, $1\leq i< n$, and also for $i=0$ if the set $\bm z_0$ is non-empty, we have:
$$
\rho\left(\min(\bm z_i), \max\left(\bm z_i\right)\right)\leq n\cdot n^{2l-2}=n^{2l-1},
$$
whence
$$
\max(\bm z_i)<\min(\bm z_{i+1}) \text{ and } \rho(\max(\bm z_i),\min(\bm z_{i+1}))\geq n^{2l}-n^{2l-1}
$$
(here we use what  $d(\bm x)\geq n^{2l}$). Now it is easy to check that:
\begin{enumerate}
\item[(a)] $|\bm z|=|\bm z_0|+|\bm z_1|+\ldots+|\bm z_n|=\bm p_{[0]}+(\bm p_{[1]}-\bm p_{[0]})+\ldots+(2n-\bm
p_{[n-1]})=2n$.
\item[(b)] For any $i<n$, $\bm x_{[i]}=\min(\bm z_{i+1})$, and
\begin{align*}
 |\bm x_{[i]}\cap \bm z|=|\bm z_0|+|\bm z_1|+\ldots+|\bm z_i|=\bm p_{[0]}+(\bm p_{[1]}-\bm p_{[0]})+\ldots+(\bm p_{[i]}-\bm p_{[i-1]})=\bm p_{[i]},
\end{align*}
i.e., $\bm x_{[i]}=\bm z_{[\bm p_{[i]}]}$. So, $\{\bm z_{[i]}: i\in \bm p\}=\bm x$.
\item[(c)] For all distinct $x, y\in \bm z^+$, $\rho(x,y)\geq \min\{n^{2l-2}, n^{2l}-n^{2l-1}\}=n^{2l-2}$, so $d(\bm z)\geq n^{2l-2}$.
\item[(d)] For all $z\in \bm z\setminus \bm x$, $\rho(\max(z\cap \bm x^+), z)\leq n^{2l-1}$, so $r(\bm x, \bm z)\leq n^{2l-1}$.
\end{enumerate}

\par Now it remains to put $\bm y=\{\bm z_{[i]}: i\in \bm q\}$ and use Fact \ref{r_d_obv}. 

\end{proof}

\begin{df}\label{Cascade_Def} For any natural numbers $l, t\geq 1$, a sequence $\bm x_0,\bm x_1, \ldots, \bm x_{l}$ of elements of~$[X^-]^n$ is called a $t$-\emph{cascade}  if for any $i<l$
\begin{enumerate}
\item $d(\bm x_i)\geq n^{t-2i}$,
\item $r(\bm x_{i},\bm x_{i+1})\leq n^{t-2i-1}$,
\item $\bm x_{i}\underset{Q}{\longleftrightarrow}\bm x_{i+1}$.
\end{enumerate}
A sequence $\bm x_0,\bm x_1, \ldots, \bm x_{l}$ is called a
\emph{cascade} if it is $t$-cascade for some natural number $t$.
\end{df}
\begin{fact}\label{exist} For any natural number $l\geq 1$, any set $\bm x\in [X^-]^n$ with the sparsity $d(\bm x)\geq n^{2l}$, and any sequence $(\bm p_0, \bm q_0),$ $(\bm p_1, \bm q_1),$ $ \ldots,$ $ (\bm p_{l-1}, \bm q_{l-1})$ of elements of $[2n]^n\times [2n]^n$ such that \mbox{$Q\subseteq \bigcap\limits_{i<l}Q_{\bm p_i\bm q_i}$} there is a $2l$-cascade $\bm x_0,\bm x_1, \ldots, \bm x_{l}$ for which $\bm x_0=\bm x$ and $\bm x_{i}\underset{\bm p_i\bm q_i}{\longrightarrow}\bm x_{i+1}$ for all $i<l$.
\end{fact}
\begin{proof} By induction on $l$ from Fact \ref{existlocal}.
\end{proof}

% Next, we need the following property of arbitrary $t$-cascades.

\begin{fact}\label{leftbord} Let $\bm x_0,\bm x_1, \ldots, \bm x_{l}$ be a $t$-cascade. Then for any $i$, $1\leq i\leq l$,
\begin{enumerate}
\item\label{leftbordItem1} $r(\bm x_{0},\bm x_{i})<\dfrac{n^t}{2}$,
\item\label{leftbordItem2} $\bm x_{0}\cap\bm x_{i} \subseteq  \bm x_{0}\cap \bm x_{i-1}$.
\end{enumerate}

\end{fact}
\begin{proof} Item \ref{leftbordItem1} follows from Fact \ref{triangle}:
\begin{align*}
&r(\bm x_{0},\bm x_{i})\leq r(\bm x_{0},\bm x_{1})+r(\bm x_{1},\bm
x_{2})+\ldots+r(\bm x_{i-1},\bm x_{i})\leq
n^{t-1}+n^{t-3}+\ldots+n^{t-2i+1}<\dfrac{n^t}{2}.
\end{align*}

\par Suppose that for some $a\in \bm x_0$
$$
a\in \bm x_{i}\text{ and }a\notin \bm x_{i-1}.
$$
Let $b=\max(a\cap \bm x_{i-1}^+)$. So, $\rho(b,a)\leq r(\bm x^{i-1}, \bm
x^{i})\leq n^{t-2i+1}$. Since $d(\bm x_0)\geq n^t$, we have: $b\notin \bm x_0^+$
and, therefore, $i-1\geq 1$. Let $c=\max(b\cap \bm x_0^+)$. By Item \ref{leftbordItem1} we have:  $\rho(c,b)\leq r(\bm x_0, \bm
x_{i-1})<
\dfrac{n^t}{2}$. Thus, $\rho (c, a)= \rho(c,b)+\rho(b,a)<
n^{t-2i+1}+\dfrac{n^t}{2}<n^t$, which contradicts to $d(\bm x_0)\geq
n^t$. Item \ref{leftbordItem2} is proved.
\end{proof}

\par For each $\bm p, \bm q\in [2n]^n$  define the function \mbox{$\varphi_{\bm p\bm q}\subset n\times n$}: $\mathrm{dom}\,\varphi_{\bm p\bm q}=\{i<n: \bm p_{[i]}\in \bm q\}$, and $\varphi_{\bm p\bm q}(i)=|\bm p_{[i]}\cap \bm q|$ for any $i\in \mathrm{dom}\,\varphi_{\bm p\bm q}$. Thus, for all $i, j<n$,
$$
\bm p_{[i]}=\bm q_{[j]}\text{ iff } i\in \mathrm{dom}\, \varphi_{\bm p\bm q}\text{ and }\varphi_{\bm p\bm q}(i)=j.
$$
Identifying each set $\bm s\in [2n]^n$ with the function $f_{\bm s}:n\to 2n$, $f_{\bm s}(i)=\bm s_{[i]}$, we can simply write $\varphi_{\bm p\bm q}=\bm q^{-1}\circ\bm p$.

\begin{fact}\label{Varphi} Let $\bm p, \bm q\in [2n]^n$, $\bm x, \bm y\in [X]^n$, $\bm x\underset{\bm p\bm q}{\longrightarrow}\bm y$, and $i, j<n$. Then
$$
\bm x_{[i]}=\bm y_{[j]}\text{ iff } i\in \mathrm{dom}\, \varphi_{\bm p\bm q}\text{ and }\varphi_{\bm p\bm q}(i)=j.
$$

\end{fact}
\begin{proof}  Arguing as in the proof of Fact \ref{I_pq_stabl}, we have the following chain of equivalences:
$$
\bm x_{[i]}=\bm y_{[j]}\,\Leftrightarrow\, \bm z_{[\bm p_{[i]}]}=\bm z_{[\bm q_{[j]}]}\,\Leftrightarrow\, \bm p_{[i]}= \bm q_{[j]}\,\Leftrightarrow\, (i\in \mathrm{dom}\, \varphi_{\bm p\bm q}\text{ and }\varphi_{\bm p\bm q}(i)=j).
$$
\end{proof}

\begin{df}\label{Full_Cascad_Def} A cascade $\bm x_0\underset{\bm p_0\bm q_0}{\longrightarrow}\bm x_1\underset{\bm p_1\bm q_1}{\longrightarrow} \ldots\underset{\bm p_{l-1}\bm q_{l-1}}{\longrightarrow}\bm x_l$ is \textit{full} if
\begin{enumerate}
\item for all distinct $\bm p, \bm q\in [2n]^n$ such that $Q\subseteq Q_{\bm p\bm q}$ there is a number $i<l$ for which $(\bm p_i,\bm q_i)=(\bm p, \bm q)$ or $(\bm p_i,\bm q_i)=(\bm q, \bm p)$;
\item for any natural number $j<l$ there are natural numbers $j_0, j_1$  for which
\begin{enumerate}
\item $j_0\leq j\leq j_1<l$
\item $j_1-j_0\geq n-1$,
\item $(\bm p_{j_0}, \bm q_{j_0})=(\bm p_{j_0+1}, \bm q_{j_0+1})=\ldots=(\bm p_{j_1}, \bm q_{j_1})$.
\end{enumerate}
\end{enumerate}
\end{df}

\begin{fact}\label{extrusion} Let $\bm x_0,\bm x_1, \ldots,\bm x_l$ be a full cascade. Then for any $i<n$
$$
(\bm x_0)_{[i]}\in \bm x_l\,\Leftrightarrow\, i\in I(Q).
$$
\end{fact}
%\textbf{++++ REFORMULATE AS $x\in \bm x_0\cap x_1\,\Rightarrow\, x\cap \bm x_0\in I(Q)$? ++++++++}
\begin{proof}
The implication $i\in I(Q) \,\Rightarrow\, (\bm x_0)_{[i]}\in \bm x_l$ follows from Fact \ref{I_pq_stabl}. Let as prove the reverse implication.
Suppose that for some number $i<n$
$$
i\notin I(Q)\text{ and } (\bm x_0)_{[i]}\in \bm x_l.
$$
From Fact \ref{leftbord}, Item \ref{leftbordItem2}, we have: $(\bm x_0)_{[i]}\in \bm x_k$ for all $k\leq l$. For all $k\leq l$, denote
$$
\theta(k)=|(\bm x_0)_{[i]}\cap \bm x_k|.
$$
Thus, $\theta(k)\in n$ and $(\bm x_0)_{[i]}=(\bm x_k)_{[\theta(k)]}$. Let
$
\bm x_0\underset{\bm p_0\bm q_0}{\longrightarrow}\bm x_1\underset{\bm p_1\bm q_1}{\longrightarrow} \ldots\underset{\bm p_{l-1}\bm q_{l-1}}{\longrightarrow}\bm x_l
$.
From Fact \ref{Varphi} we have:
$$
\theta(k)=\varphi_{\bm p_{k-1}\bm q_{k-1}}\circ \varphi_{\bm p_{k-2}\bm q_{k-2}}\circ\ldots\circ\varphi_{\bm p_0\bm q_0}(i).
$$
Since $I_{\bm p\bm q}=I_{\bm q\bm p}$ for all $\bm p,\bm q\in [2n]^n$ and the given cascade is full, there is a number $j<l$ such than $i\notin I_{\bm p_j\bm q_j}$. Let us choose the minimum of these numbers. Thus, $i\notin I_{\bm p_j\bm q_j}$ and $i\in I_{\bm p_k\bm q_k}$ for all $k<j$. From Fact~\ref{I_pq_stabl} we have
$$
\theta(j)=\theta(j-1)=\ldots=\theta(0)=i.
$$
Besides, since the given cascade is full, we have:
$$
(\bm p_{j}, \bm q_{j})=(\bm p_{j+1}, \bm q_{j+1})=\ldots=(\bm p_{j+s}, \bm q_{j+s}).
$$
for some $s\geq n-1$. Therefore, for all $k$, $j< k\leq j+s+1$, we have
$$
\theta(k)=\underbrace{\varphi_{\bm p_j\bm q_j}\circ
\varphi_{\bm p_j\bm q_j}\circ\ldots\circ\varphi_{\bm p_j\bm q_j}}_{\text{$k-j$
times}}(i).
$$
It is easy to see that
$$
x<y\,\Rightarrow\, \varphi_{\bm p\bm q}(x)<\varphi_{\bm p\bm q}(y).
$$
for all $\bm p, \bm q\in [2n]^n$ and $x,y\in \mathrm{dom}\, \varphi_{\bm p\bm q}$. Consequently, the
sequence $\theta(k)$, $j\leq k\leq j+s+1$, monotonically increases (if
$\varphi_{\bm p_j\bm q_j}(i)>i$), monotonically decreases  (if
$\varphi_{\bm p_j\bm q_j}(i)<i$), or is constant  (if
$\varphi_{\bm p_j\bm q_j}(i)=i$). 
All these cases lead to a contradiction. The first two cases imply $\theta(j+s+1)\notin n$, and the last case is equivalent to
$i\in I_{\bm p_j\bm q_j}$. 
\end{proof}

\begin{fact}\label{existrepetfreeall} For any set \mbox{$\bm x\in [X]^n$} with the sparsity $d(\bm x)\geq m= n^{n\binom{2n}{n}\left(\binom{2n}{n}-1\right)}$ there is an \mbox{$n\binom{2n}{n}\left(\binom{2n}{n}-1\right)$}-cascade $\bm x_0, \bm x_1, \ldots, \bm x_{l}$ such that $\bm x_0=\bm x$, and
$\bm x_0\cap \bm x_{l}=\{(\bm x_0)_{[i]}: i\in I(Q)\}.$
\end{fact}
\begin{proof}
By combining Facts \ref{exist} and \ref{extrusion}.
\end{proof}

Now we will prove that the equivalence class $[\bm
x]_{\underset{Q}{\longleftrightarrow}}$ of a sufficiently sparse set $\bm x$ is
closed with respect to ``small shifts'' of elements $\bm x_{[i]}$,
$i\notin I(Q)$.

\begin{fact}\label{changelocal} Let $\bm p, \bm q\in [2n]^n$, $\bm x, \bm y\in [X^-]^n$,  $\bm x\underset{\bm p\bm q}{\longrightarrow}\bm y$, $i<n$, and $\bm x_{[i]}\notin \bm y$. Let $a=\max(\bm x_{[i]}\cap (\bm x\cup\bm y)^+)$. Then  for any $x\in X$ such that $a<x\leq \bm x_{[i]}$ and $\rho(a,x)\geq 2n$,
$$
(\bm x\setminus \{\bm x_{[i]}\})\cup \{x\}\underset{\bm p\bm q}{\longrightarrow}\bm y,
$$
and, therefore, $(\bm x\setminus \{\bm x_{[i]}\})\cup \{x\}\underset{Q}{\longleftrightarrow}\bm x$.
\end{fact}
\begin{proof} Let $\bm z\in [X]^{2n}$, $\{\bm z_{[i]}: i\in \bm p\}=\bm x$, and $\{\bm z_{[i]}: i\in \bm q\}=\bm y$.  Let the set $\bm z_0$ and the numbers $j$, $s$, $k$ be such that $\bm z_0=\{z\in \bm z: a\leq z\leq \bm x_{[i]}\}=\{\bm z_{[j]}, \bm z_{[j+1]}, \ldots, \bm z_{[j+s]}\}$, and $\bm z_{[j]}=X_{[k]}$. It is easy to see that $1\leq |\bm z_0|< 2n$, and for any $b\in\bm x\cup \bm y$, $b\leq \min(\bm z_0)$ or $b\geq \max(\bm z_0)$. Let
$$
\bm z_0'=\{X_{[k]}, X_{[k+1]}, \ldots, X_{[k+s-1]}, x\}\text{ and }\bm z^\ast=(\bm z\setminus \bm z_0)\cup \bm z_0'.
$$
Since $\rho(a, x)\geq 2n$, we have $x> X_{[k+s-1]}$ (or $\bm z_0=\{\bm x_{[i]}\}$), $|\bm z_0'|=|\bm z_0|$, $(\bm z\setminus \bm z_0)\cap \bm z_0'=\emptyset$, and for any $b\in(\bm x\cup \bm y)\setminus \{\bm x_{[i]}\}$, $b\leq \min(\bm z'_0)$ or $b> \max(\bm z'_0)$. So, $|\bm z^\ast|=2n$, and for any $b\in (\bm x\cup \bm y)\setminus \{\bm x_{[i]}\}$,
$$
|b\cap \bm z^\ast|=|b\cap (\bm z\setminus \bm z_0)|+|b\cap \bm z_0'|=|b\cap \bm z|.
$$ 
Besides, $|\bm x_{[i]}\cap \bm z|=|x\cap \bm z^\ast|$. Consequently,
\begin{align*}
\{\bm z^{\ast}_{[i]}:i\in \bm p\}=(\bm x\setminus \{\bm x_{[i]}\})\cup \{x\},
\text{ and } \{\bm z^{\ast}_{[i]}:i\in \bm q\}=\bm y.
\end{align*}
The Fact is proved.
\end{proof}

\begin{fact}\label{change} Let $\bm x\in [X^-]^n$, $i\in n\setminus I(Q)$, $x\in X$, $\bm y=(\bm x\setminus \{\bm x_{[i]}\})\cup\{x\}$, $\min(d(\bm x), d(\bm y))\geq m$, and either $\bm x_{[i-1]}<x\leq \bm x_{[i]}$, or $i=0$ and $e<x\leq \bm x_{[0]}$. 
Then $
\bm x\underset{Q}{\longleftrightarrow}\bm y
$.
\end{fact}

\begin{proof} Notice that $\dfrac{m}{2}=\dfrac{1}{2}n^{n\binom{2n}{n}\left(\binom{2n}{n}-1\right)}\geq 2n$ (we assume all the time that $n\geq 2$). By Fact \ref{existrepetfreeall} there exists an \mbox{$n\binom{2n}{n}\left(\binom{2n}{n}-1\right)$}-cascade $\bm x_0, \bm x_1, \ldots, \bm
x_{l}$ such that $\bm x_0=\bm x$ and $\bm x_{[i]}\notin \bm x_{l}$. Since the relation $\underset{Q}{\longleftrightarrow}$ is transitive, we have $\bm x\underset{\bm p\bm q}{\longrightarrow} \bm x_l$ for some $\bm p, \bm q\in [2n]^n$ for which $Q\subseteq Q_{\bm p\bm q}$. 
Let $a=\max(\bm x_{[i]}\cap (\bm x\cup\bm x_l)^+)$, and let
$$
b=\max(\bm x_{[i]}\cap \bm x^+)=\begin{cases}
\bm x_{[i-1]}& \text{if $i\neq 0$},\\
e & \text{otherwise}.
\end{cases}
$$
Obviously, $b\leq a<\bm x_{[i]}$. Since $d(\bm y)\geq m$, we have $\rho(b, x)\geq m$.  If $a\in \bm x^+$ then $b=a$, and we can immediately use Fact \ref{change}. Otherwise,  $\rho(b,a)\leq r(\bm x, \bm y) < \dfrac{m}{2}$ by
Item \ref{leftbordItem1} of Fact \ref{leftbord}. Therefore,
$\rho(a,x)=\rho(b,x)-\rho(b,a)\geq \dfrac{m}{2}\geq 2n$. It remains to apply Fact \ref{change} again.  
\end{proof}
We can now complete the proof of Lemma  \ref{mainlemma}.  For any $\bm x\in [X^-]^n$ with the sparsity $d(\bm x)\geq m$ define the set~$\widehat{\bm x}$ as follows: 
\begin{enumerate}
\item $\widehat{\bm x}_{[i]}=\bm x_{[i]}$ for all $i\in I(Q)$,
\item if $0\notin I(Q)$ then $\widehat{\bm x}_{[0]}=X_{[m]}$,
\item for all $i$, $0<i<n$, if $i\notin I(Q)$ then $\rho(\widehat{\bm x}_{[i-1]}, \widehat{\bm x}_{[i]})=m$.
\end{enumerate}
It is easy to see that $\widehat{\bm x}_{[i]}\leq \bm x_{[i]}$ for all
$i<n$. For each $i\leq n$, let
$$
\widehat{\bm x}_i=\{\widehat{\bm x}_{[0]}, \widehat{\bm x}_{[1]}, \ldots, \widehat{\bm x}_{[i-1]}, \bm x_{[i]}, \bm
x_{[i+1]}, \ldots, \bm x_{[n-1]}\}.
$$
Obviously, for all $i< n$,
$$
\widehat{\bm x}_{i+1}= \left(\widehat{\bm x}_{i}\setminus \left(\widehat{\bm x}_{i}\right)_{[i]}\right)\cup \{\widehat{\bm x}_{[i]}\}.
$$
Besides, $d(\widehat{\bm x}_i)\geq m$ for all $i\leq n$. By Fact \ref{change}, we have
$$
\bm x=\widehat{\bm x}_0\underset{Q}{\longleftrightarrow}\widehat{\bm x}_1\underset{Q}{\longleftrightarrow}\ldots\underset{Q}{\longleftrightarrow}\widehat{\bm x}_n=\widehat{\bm x}.
$$
It remains to note that $\widehat{\bm x}=\widehat{\bm y}$ for all $\bm x, \bm y\in [X]^n$ such that $\min(d(\bm x), d(\bm y))\geq m$ and $\bm x_{[i]}=\bm y_{[i]}$ for all $i\in I(Q)$.
\end{proof}
Theorem \ref{main} easily follows from Lemma \ref{mainlemma}.
Let
$$
R_0=\{X_{[i]}: i<m\} \text{ and } R_j = \{X_{[j+is]}: 1\leq i<\omega\}\text{ for all j, $1\leq j\leq m$}.
$$
The family $\{R_j\}_{j\leq m}$ is a partition of $X$. The set $R_0$ is finite of cardinality $m$. The sparsity of $R_j$, $1\leq j\leq m$, is $m$. Therefore, $d(\bm x)\geq m$ for all $\bm x\in [R_j]^n$. By Lemma \ref{mainlemma}, for all $j$, $1\leq j\leq m$, and  $\bm x, \bm y\in [R_j]^n$, we have  
$$
\{\bm x_{[i]}:i\in I(Q)\}=\{\bm y_{[i]}:i\in I(Q)\}\,\Rightarrow\, \bm
x\approx_\mathcal P\bm y,
$$
i.e., $R_j$ is $I(Q)$-canonical for $\mathcal P$.
\end{proof}

\subsection*{2. Application to  the Theory of Ultrafilters}
\par An ultrafilter $\mathfrak u$ on $\mathscr{P}(\omega)$ is called a \emph{Ramsey ultrafilter} if it is nonprincipal and for any $n$, $1\leq n<\omega$, and finite partition~$\mathcal P$ of $[\omega]^n$, $\mathfrak u$ contains some set $X\subseteq \omega$ that is homogenous for $\mathcal P$.
Continuum hypothesis (and some other assumptions, including Martin's Axiom) implies the existence of Ramsey ultrafilters.\footnote{However, the existence of Ramsey ultrafilters is independent of ZFC, see \cite{Wimmers}.} There are many equivalent characterizations of Ramsey ultrafilters, see \cite{Comfort Negrepontis}. In particular, an ultrafilter~$\mathfrak u$ is Ramsey if and only if it is selective, and if and only if it is minimal.
\par Recall these definitions. An ultrafilter $\mathfrak u$ on $\mathscr{P}(\omega)$ is \emph{selective} if for every function \mbox{$f:\omega\to \omega$} there is $X\in \mathfrak u$ such that the restriction $f\upharpoonright X$ of $f$ to $X$ is either one-to-one or constant.
\par The concept of a minimal ultrafilter is based on the notion of ultrafilter extension of unary functions and the Rudin-Keisler (pre)order. For any set $A$, the set of all ultrafilters on $\mathscr P(A)$ is denoted by $\bm\beta A$. For any function $f: A\to B$ the ultrafilter extension $\widetilde f$ of $f$ is the function from $\bm\beta A$ to $\bm\beta B$ defined by
$$
\widetilde f(\mathfrak u)=\{S\subseteq Y: (\forall X\in \mathfrak u)(\exists x\in X)\, f(x)\in S\}
$$
for all $\mathfrak u\in \bm\beta A$.
\par The \textit{Rudin-Keisler preorder} on $\bm\beta A$ is the binary relation $\leq_{\mathrm{RK}}$ defined by
$$
\mathfrak u\leq_{\mathrm{RK}}\mathfrak v\,\Leftrightarrow\, \widetilde f(\mathfrak v)=\mathfrak u\text{ for some $f:A\to A$}
$$
for all $\mathfrak u, \mathfrak v\in \bm\beta A$. An ultrafilter $\mathfrak u\in \bm\beta A$ is called \emph{minimal} if
$$
\mathfrak v\leq_{\mathrm{RK}}\mathfrak u\,\Rightarrow\,\text{ $\mathfrak v$ is principal or $\mathfrak u\leq_{\mathrm{RK}}\mathfrak v$}
$$
for any $\mathfrak v\in \bm\beta A$.
In other words, $\mathfrak u$ is minimal if for any function $f: A\to A$ either $\widetilde f(\mathfrak u)$ is principal or there is a function $g:A\to A$ such that $\widetilde g\left(\widetilde f (\mathfrak u)\right)=\mathfrak u$.
\par The equivalence relation $\leq_{\mathrm{RK}}\cap \leq^{-1}_{\mathrm{RK}}$ is denoted by $\approx_{\mathrm{RK}}$. The Rudin-Keisler preorder naturally extends to the quotient set $\bm\beta A/\approx_{\mathrm{RK}}$: $\tau(\mathfrak u)\leq_{\mathrm{RK}} \tau(\mathfrak v)\,\Leftrightarrow\, \mathfrak u\leq_{\mathrm{RK}}\mathfrak v$ for all equivalence class $\tau(\mathfrak u)$ and $\tau(\mathfrak v)$ of ultrafilters $\mathfrak u$ and $ \mathfrak v$, respectively. The relation $\leq_{\mathrm{RK}}$ is a (partial) order on $\bm\beta A/\approx_{\mathrm{RK}}$, and ultrafilter $\mathfrak u$ is minimal iff the equivalence class $\tau(\mathfrak u)$ is a minimal element of the poset $\left(\bm\beta A/\approx_{\mathrm{RK}}\right)\setminus \{\tau(a)\}$ where $a$ is any principal ultrafilter on $A$ (all principal ultrafilters on $A$ are equivalent w.r.t. $\approx_{\mathrm{RK}}$).
\par  Theorem \ref{main} allows us to propose a modification of the above characterizations of Ramsey ultrafilters using $n$-ary maps and their  ultrafilter extensions.

\par Ultrafilter extensions of binary maps, especially of group and semigroup operations, have been considered since the 60s of the 20th century. The results obtained in this field have found numerous Ramsey-theoretic
applications in number theory, algebra, topological
dynamics, and ergodic theory.
The book~\cite{Hindman Strauss} is a~comprehensive
treatise of this area, with an historical information.
\par Ultrafilter extensions of arbitrary $n$-ary maps (and, more broadly, of first-order models)
have been introduced independently in recent works
by Goranko~\cite{Goranko} and
Saveliev~\cite{Saveliev,Saveliev(inftyproc)}. Further studies  can be found in \cite{Saveliev(idempotents),Poliakov Saveliev, Poliakov Saveliev 2019, Saveliev Shelah}.

 For
a~map $f:A^n\to B$, the extended
map $\widetilde f:(\bm\beta A)^n\to\bm\beta B$
can be defined by recursion. A nullary function $f$ is identified with a constant $c_f\in B$. For $n=0$, we define $\widetilde f$ as the principal ultrafilter generated by $c_f$, i.e. $\widetilde f=\{S\subseteq B: c_f\in S\}$. For $n>0$ we define $$\widetilde f(\mathfrak u_1, \mathfrak u_2, \ldots, \mathfrak u_n)=\{S\subseteq B: (\forall X\in \mathfrak u_1)(\exists x\in X)\, S\in \widetilde f_x(\mathfrak u_2, \ldots, \mathfrak u_n)\},$$ where $f_x(x_2, \ldots, x_n)=f(x, x_2, \ldots, x_n)$ for all $x, x_2, \ldots, x_n\in A$. It is easy to verify that for $n = 1$ we have the definition equivalent to that given above.
\par For all bijections $f,g:A^n\to A$ there is a function $h:A\to A$ such that $f(x_0, x_1, \ldots, x_{n-1})=h(g(x_0, x_1, \ldots, x_{n-1}))$ for all $x_0, x_1, \ldots, x_{n-1}\in A$. In \cite{Saveliev},  it is proved that the extension operator commutes with the composition $h\circ g$ if $h$ is an one-place function. So, we have
$$
\widetilde f(\mathfrak u_0, \mathfrak u_1, \ldots, \mathfrak u_{n-1})=\widetilde{h\circ g}(\mathfrak u_0, \mathfrak u_1, \ldots, \mathfrak u_{n-1})=\widetilde{h}(\widetilde{g}(\mathfrak u_0, \mathfrak u_1, \ldots, \mathfrak u_{n-1})
$$
for all $\mathfrak u_0, \mathfrak u_1, \ldots, \mathfrak u_{n-1}\in\bm\beta A$. Therefore, ultrafilters $\widetilde f(\mathfrak u_0, \mathfrak u_1, \ldots, \mathfrak u_{n-1})$ and $\widetilde g(\mathfrak u_0, \mathfrak u_1, \ldots, \mathfrak u_{n-1})$ are $\mathrm{RK}$-equivalent. Considering ultrafilters up to equivalence relation~$\approx_\mathrm{RK}$, we denote by $\mathfrak u_0\times\mathfrak u_1\times\ldots\times\mathfrak u_{n-1}$ the ultrafiter $\widetilde f(\mathfrak u_0, \mathfrak u_1, \ldots, \mathfrak u_{n-1})$ for some one-to-one map $f: A^n\to A$.
\begin{df} A function $f:\omega^n\to\omega$ is called \emph{selectively upward injective on a set $X\subseteq \omega$ w.r.t. a set (of indices) $I\subseteq n$} if
$$
f(x_0, x_1, \ldots, x_{n-1})=f(y_0, y_1, \ldots, y_{n-1})\,\Leftrightarrow\, \bigwedge\limits_{i\in I}(x_i=y_i)
$$
for all $x_0<x_1< \ldots< x_{n-1}$ and $y_0<y_1< \ldots< y_{n-1}$ from $X$.  A function $f:\omega^n\to\omega$ is called
\begin{enumerate}
\item[i.] \emph{selectively upward injective} on a set $X\subseteq \omega$ if it is selectively injective on a set $X\subseteq \omega$ w.r.t. some non-empty set of indices $J\subseteq n$,
\item[ii.] \emph{upward constant} on a set $X\subseteq \omega$ if it is selectively injective on a set $X\subseteq \omega$ w.r.t. $\emptyset$, i.e., $$
f(x_0, x_1, \ldots, x_{n-1})=f(y_0, y_1, \ldots, y_{n-1})
$$
for all $x_0<x_1< \ldots< x_{n-1}$ and $y_0<y_1< \ldots< y_{n-1}$ from $X$.
\end{enumerate}
\end{df}
\begin{thm}\label{Th2} Let $\mathfrak u$ be a non-principal ultrafilter on $\mathscr P(\omega)$. Then the following conditions are equivalent:
\begin{enumerate}
\item $\mathfrak u$ is Ramsey ultrafilter;
\item for every $n$, $1\leq n<\omega$, and partition~$\mathcal P$ of $[\omega]^n$, $\mathfrak u$ contains some set $X$ that is canonical for $\mathcal P$;
\item for every $n$, $1\leq n<\omega$, and function \mbox{$f:\omega^n\to \omega$}, $\mathfrak u$ contains some set $X$ such that~ $f$ is either selectively upward injective or upward constant on $X$;
%\item For every natural number $n\geq 1$ and function $f:\omega^n\to \omega$ there is a set $X\in \mathfrak u$ such that $f$ is either upper quasi-invertible on $X$ or upper constant on $X$.
\item for every $n$, $1\leq n<\omega$, and function $f:\omega^n\to \omega$, either $
\widetilde f(\mathfrak u, \mathfrak u, \ldots, \mathfrak u)$ is principal or $\widetilde f(\mathfrak u, \mathfrak u, \ldots, \mathfrak u)\approx_\mathrm{RK}\underbrace{\mathfrak u\times \mathfrak u\times\ldots\times\mathfrak u}_{\text{$m$ times}}$ for some $m$, $1\leq m\leq n$.

%\item For every natural number $n\geq 1$ and function $f:\omega^n\to \omega$
%$$
%\widetilde f(\mathfrak u, \mathfrak u, \ldots, \mathfrak u)\text{ is principal or $\mathfrak u\leq_\mathrm{RK}\widetilde f(\mathfrak u, \mathfrak u, \ldots, \mathfrak u)$}
%$$
\end{enumerate}
\end{thm}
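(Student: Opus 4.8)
The plan is to establish the cycle $(1)\Rightarrow(2)\Rightarrow(3)\Rightarrow(4)\Rightarrow(1)$; the combinatorial work of Section~1 enters only through $(1)\Rightarrow(2)$, while $(4)\Rightarrow(1)$ will reduce to the classical equivalence between minimality and the Ramsey property recalled in the introduction. For $(1)\Rightarrow(2)$ I would fix a partition $\mathcal P$ of $[\omega]^n$, take the finite partition $\mathcal Q$ of $[\omega]^{2n}$ given by Theorem~\ref{main_var}, use that $\mathfrak u$ (being Ramsey) contains a set $X$ homogeneous for $\mathcal Q$, write $X=Y_1\cup\ldots\cup Y_t$ with each $Y_j$ canonical for $\mathcal P$ (Theorem~\ref{main_var} again), and conclude that some $Y_j$ lies in $\mathfrak u$ since $\mathfrak u$ is an ultrafilter; this $Y_j$ is infinite because $\mathfrak u$ is nonprincipal, hence is the desired canonical set.

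For $(2)\Leftrightarrow(3)$ I would use the dictionary between partitions and colorings. Given $f:\omega^n\to\omega$, set $g(\bm p)=f(\bm p_{[0]},\ldots,\bm p_{[n-1]})$ for $\bm p\in[\omega]^n$ and let $\mathcal P$ be the partition of $[\omega]^n$ into the fibres of $g$; then for $\bm p,\bm q\in[X]^n$ one has $\bm p\approx_\mathcal P\bm q$ iff $g(\bm p)=g(\bm q)$, so $X$ is $I$-canonical for $\mathcal P$ exactly when $f$ is selectively upward injective on $X$ with respect to $I$ (with $I=\emptyset$ meaning upward constant). Thus $(2)$ applied to this $\mathcal P$ gives $(3)$; conversely, since every partition of the countable set $[\omega]^n$ has at most countably many classes it arises from such a $g$, so $(3)$ gives $(2)$.

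The heart of the argument is $(3)\Rightarrow(4)$. First I would record two elementary properties of the diagonal extension $\widetilde f(\mathfrak u,\ldots,\mathfrak u)$ of a map $f:\omega^n\to\omega$, each provable by induction on $n$ from the fact that members of the nonprincipal $\mathfrak u$ are infinite: \emph{locality} --- if $f$ and $f'$ agree on all increasing tuples with entries in a fixed $X\in\mathfrak u$, then $\widetilde f(\mathfrak u,\ldots,\mathfrak u)=\widetilde{f'}(\mathfrak u,\ldots,\mathfrak u)$; and \emph{projection} --- if $f(x_0,\ldots,x_{n-1})=f'(x_{i_1},\ldots,x_{i_m})$ identically for some $i_1<\ldots<i_m$, then $\widetilde f(\mathfrak u,\ldots,\mathfrak u)=\widetilde{f'}(\mathfrak u,\ldots,\mathfrak u)$ (the quantifiers over the coordinates outside $\{i_1,\ldots,i_m\}$ are vacuous since sets in $\mathfrak u$ are nonempty). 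Now take $X\in\mathfrak u$ and $I\subseteq n$ as in $(3)$. If $I=\emptyset$, then $f$ equals some constant $c$ on increasing tuples from $X$, so by locality $\widetilde f(\mathfrak u,\ldots,\mathfrak u)$ is the principal ultrafilter generated by $c$. If $I=\{i_1<\ldots<i_m\}$ with $1\le m\le n$, define $f':\omega^m\to\omega$ so that on increasing tuples from $X$ it records the value of $f$ as a function of the $I$-coordinates (well defined, and injective on such tuples, by the defining property of $I$), extend $f'$ arbitrarily, and obtain $\widetilde f(\mathfrak u,\ldots,\mathfrak u)=\widetilde{f'}(\mathfrak u,\ldots,\mathfrak u)$ ($m$ copies) by locality and projection. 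Finally fix a one-to-one $h:\omega^m\to\omega$, so that $\mathfrak u\times\ldots\times\mathfrak u$ ($m$ factors) $\approx_{\mathrm{RK}}\widetilde h(\mathfrak u,\ldots,\mathfrak u)$; choosing $\psi$ with $\psi\circ f'=h$ on increasing tuples from $X$ (possible since $f'$ is one-to-one there) and $\psi'$ with $\psi'\circ h=f'$ identically (possible since $h$ is one-to-one), and invoking commutation of the extension operator with post-composition by one-place maps together with locality, gives $\widetilde h(\mathfrak u,\ldots,\mathfrak u)=\widetilde\psi(\widetilde{f'}(\mathfrak u,\ldots,\mathfrak u))$ and $\widetilde{f'}(\mathfrak u,\ldots,\mathfrak u)=\widetilde{\psi'}(\widetilde h(\mathfrak u,\ldots,\mathfrak u))$, hence $\widetilde f(\mathfrak u,\ldots,\mathfrak u)\approx_{\mathrm{RK}}\mathfrak u\times\ldots\times\mathfrak u$ ($m$ factors).

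For $(4)\Rightarrow(1)$ I would simply instantiate $(4)$ at $n=1$: there $\widetilde f(\mathfrak u)$ is the $\mathrm{RK}$-image of $\mathfrak u$ under $f:\omega\to\omega$ and ``$\mathfrak u$ with a single factor'' is $\approx_{\mathrm{RK}}\mathfrak u$, so $(4)$ asserts precisely that every $\mathfrak v\le_{\mathrm{RK}}\mathfrak u$ is principal or $\approx_{\mathrm{RK}}\mathfrak u$, that is, that $\mathfrak u$ is minimal; by the classical theorem recalled in the introduction (see \cite{Comfort Negrepontis}) a nonprincipal minimal ultrafilter is Ramsey. I expect the main obstacle to be $(3)\Rightarrow(4)$ --- namely, proving the locality and projection properties of the diagonal extension rigorously and then carrying out the two-sided $\mathrm{RK}$-reduction via unary post-compositions; the remaining implications are bookkeeping once Theorem~\ref{main} (through Theorem~\ref{main_var}) has supplied the combinatorial content of $(1)\Rightarrow(2)$.
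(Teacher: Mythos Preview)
Your outline is correct and matches the paper's argument almost step for step: the same cycle $1\Rightarrow2\Rightarrow3\Rightarrow4$ closed via minimality at $n=1$, with Theorem~\ref{main_var} supplying $1\Rightarrow2$, and your ``locality'' and ``projection'' principles being precisely the instances of the paper's Proposition~\ref{Prop1} (proved there by induction on $n+m$) that are actually invoked in $3\Rightarrow4$. The one point to tighten is the injectivity of $f'$: it is one-to-one only on those increasing $m$-tuples from $X$ that occur as $I$-projections of increasing $n$-tuples from $X$, so to obtain $\psi$ with $\psi\circ f'=h$ on \emph{all} increasing $m$-tuples from $X$ you must extend $f'$ injectively rather than ``arbitrarily''---the paper does exactly this, extending $h_1\circ g_0$ to a one-to-one $w:\omega^m\to\omega$ before invoking Proposition~\ref{Prop1}.
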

\begin{proof} ($1\,\Rightarrow\, 2$). Let $\mathcal P$ be a partition of $[\omega]^n$. By Theorem \ref{main_var} there is a finite partition $\mathcal Q$ of $[\omega]^{2n}$ such that any set $X\subseteq \omega$ that is homogeneous for $\mathcal Q$ is a finite union of sets $X_0, X_1, \ldots, X_m$ that are canonical for $\mathcal P$. Since $\mathfrak u$ is a Ramsey ultrafilter, it contains some set $X$ that is homogeneous for $\mathcal Q$. Since $\mathfrak u$ is an ultrafilter, it contains one of the sets $X_0, X_1, \ldots, X_m$.

\par ($2\,\Rightarrow\, 3$). For any $c\in \omega$ denote $P_c=\{\bm x\in [\omega]^n: f(\bm x_{[0]}, \bm x_{[1]}, \ldots, \bm x_{[n-1]})=c\}$. Obviously, the set $\mathcal P=\{P_c: c\in \omega\}$ is a partition of $[\omega]^n$, and a set $X\subseteq \omega$ is $I$-canonical for $\mathcal P$ if and only if $f$ is selectively upward injective on $X$ w.r.t. $I$.

\par ($3\,\Rightarrow\, 4$). First, let us prove the following proposition.
\begin{prop}\label{Prop1} Let $\mathfrak u\in \bm\beta\omega\setminus\omega$, $n,m\in \omega$, \mbox{$f:\omega^n\to\omega$}, $g:\omega^m\to\omega$. Let also $k\in \omega$, $\bm p\in [k]^n$, $\bm q\in [k]^m$, and there is a set $X\in \mathfrak u$ such that
$$
f\left(x_{\bm p_{[0]}}, x_{\bm p_{[1]}}, \ldots, x_{\bm
p_{[n-1]}}\right)=g\left(x_{\bm q_{[0]}}, x_{\bm q_{[1]}}, \ldots,
x_{\bm q_{[m-1]}}\right).
$$
for all $x_0<x_1<\ldots <x_{k-1}\in X$. Then

$$
\widetilde f(\underbrace{\mathfrak u, \mathfrak u, \ldots, \mathfrak u}_{\text{$n$ times}})=\widetilde g(\underbrace{\mathfrak u, \mathfrak u, \ldots, \mathfrak u}_{\text{$m$ times}}).
$$
\end{prop}
\begin{proof} By induction on $n+m$. The case $n=m=0$ (induction base) is clear.
\par Let $n+m>0$, and
$$
f\left(x_{\bm p_{[0]}}, x_{\bm p_{[1]}}, \ldots, x_{\bm
p_{[n-1]}}\right)=g\left(x_{\bm q_{[0]}}, x_{\bm q_{[1]}}, \ldots,
x_{\bm q_{[m-1]}}\right).
$$
for all $x_0<x_1<\ldots <x_{k-1}\in X$. Without loss of generality,
suppose $n>0$ and $m>0\,\Rightarrow\,\bm p_{[0]}\leq \bm q_{[0]}$.
Denote $\bm p'=\bm p\setminus\{\bm p_{[0]}\}$. For all $y\in X$ and
$x_0<x_1<\ldots<x_{k-1}\in X\setminus (y+1)$ we have
$$
f_y\left(x_{\bm p'_{[0]}}, x_{\bm p'_{[1]}}, \ldots, x_{\bm
p'_{[n-2]}}\right)=
\begin{cases}
g\left(x_{\bm q_{[0]}}, x_{\bm q_{[1]}}, \ldots, x_{\bm q_{[m-1]}}\right)& \text{ if $m= 0$ or $\bm p_{[0]}< \bm q_{[0]}$},\\
g_y\left(x_{\bm q_{[0]}}, x_{\bm q_{[1]}}, \ldots, x_{\bm
q_{[m-1]}}\right)&\text{ if $m\neq 0$ and $\bm p_{[0]}= \bm q_{[0]}$}.
\end{cases}
$$
Since ultrafilter $\mathfrak u$ is non-principal, $X\setminus(y+1)\in \mathfrak u$. Hence, by induction hypothesis, for any $y\in X$ we have
$$
\widetilde f_y(\mathfrak u, \ldots, \mathfrak u)=
\begin{cases}
\widetilde g(\mathfrak u,\mathfrak u, \ldots, \mathfrak u)& \text{ if $m= 0$ or $\bm p_{[0]}< \bm q_{[0]}$},\\
\widetilde g_y(\mathfrak u,\mathfrak u \ldots, \mathfrak u)& \text{
if $m\neq 0$ and $\bm p_{[0]}= \bm q_{[0]}$}.
\end{cases}
$$
Let $S\in \widetilde f(\mathfrak u, \mathfrak u, \ldots, \mathfrak u)$, i.e., $$ (\forall Y\in \mathfrak u)(\exists y\in Y)\, S\in \widetilde f_y(\mathfrak u, \ldots, \mathfrak u).$$ In both the cases, $S\in \widetilde g(\mathfrak u,\mathfrak u, \ldots, \mathfrak u)$, which implies $\widetilde f(\mathfrak u, \mathfrak u, \ldots, \mathfrak u)\subseteq \widetilde g(\mathfrak u, \mathfrak u, \ldots, \mathfrak u)$. Since $\widetilde f(\mathfrak u, \mathfrak u, \ldots, \mathfrak u)$ and $\widetilde g(\mathfrak u, \mathfrak u, \ldots, \mathfrak u)$ are ultrafilters, we have $\widetilde f(\mathfrak u, \mathfrak u, \ldots, \mathfrak u)= \widetilde g(\mathfrak u, \mathfrak u, \ldots, \mathfrak u)$.
\end{proof}
\par We continue the proof of the implication $2\Longrightarrow 3$. Let a function $f:\omega^n\to \omega$ be selectively upward injective on $Y\in \mathfrak u$ w.r.t. $I\subseteq n$. Denote $|I|=m$. Let $Z$ be the set of all sequences $(x_0, x_1, \ldots, x_{m-1})\in Y^m$ such  that $x_0<x_1<\ldots <x_{m-1}$, $|X\cap x_0|\geq I_0$, and $|X\cap (x_{i}\setminus x_{i-1})|\geq I_{[i]}-I_{[i-1]}$ for all $i$, $1\leq i\leq m-1$. Define the function $g_0: Z\to\omega$ by
$$
g_0(x_0, x_1, \ldots, x_{m-1})=f(y_0, y_1, \ldots, y_{n-1})
$$
for some $(y_0, y_1, \ldots, y_{n-1})\in X^n$ for which $y_0<y_1<\ldots <y_{n-1}$ and $y_{I_{[i]}}=x_i$ for all $i<m$. The function $g_0$ is well defined because 
$$
\bigwedge\limits_{i\in J}(x_i=y_i)\,\Rightarrow\, f(x_0, x_1, \ldots, x_{n-1})=f(y_0, y_1, \ldots, y_{n-1})
$$
for all $x_0<x_1< \ldots< x_{n-1}$ and $y_0<y_1< \ldots< y_{n-1}$ from $X$. Besides, since for all $x_0<x_1< \ldots< x_{n-1}$ and $y_0<y_1< \ldots< y_{n-1}$ from $X$
 $$
f(x_0, x_1, \ldots, x_{n-1})=f(y_0, y_1, \ldots, y_{n-1})\,\Rightarrow\, \bigwedge\limits_{i\in J}(x_i=y_i),
$$
the function $g_0$ is injective, or $m=0$ and $g_0$ is constant.
\par If $m=0$ the ultrafilter $\widetilde f(\mathfrak u, \mathfrak u, \ldots, \mathfrak u)$ is principal by Proposition \ref{Prop1} .
\par Let $m>0$. Choose a set $Z'\subseteq \omega$ such that $|Z'|=|\omega\setminus Z'|=\omega$ and functions $h_1, h_2:\omega\to \omega$ such that $h_1$ bijectively maps $g(Z)$ onto $Z'$ and $h_2(h_1(x))=x$ for all $x\in Z'$. The map $h_1\circ g: Z\to Z'$ can be extended to a one-to-one function $w: \omega^m\to\omega$. By Proposition \ref{Prop1} we have
\begin{align*}
\widetilde h_1(\widetilde f(\mathfrak u, \mathfrak u, \ldots, \mathfrak u))=\widetilde{h_1\circ f}(\mathfrak u, \mathfrak u, \ldots, \mathfrak u)=\widetilde{w}(\mathfrak u, \mathfrak u, \ldots, \mathfrak u),\\
\widetilde f(\mathfrak u, \mathfrak u, \ldots, \mathfrak u)=\widetilde{h_2\circ w}(\mathfrak u, \mathfrak u, \ldots, \mathfrak u)=\widetilde h_2(\widetilde w(\mathfrak u, \mathfrak u, \ldots, \mathfrak u)).
\end{align*}
Therefore, $
\widetilde f(\mathfrak u, \mathfrak u, \ldots, \mathfrak u)\approx_\mathrm{RK}\underbrace{\mathfrak u\times \mathfrak u\times\ldots\times\mathfrak u}_{\text{$m$ times}}.
$
\par To prove $3\,\Rightarrow\, 1$, it suffices to restrict ourselves to the case $n=1$ and recall that each minimal ultrafilter is a Ramsey ultrafilter, see \cite{Comfort Negrepontis}.
\end{proof}

\paragraph*{Remark.}
\par In combinatorial applications of the theory of ultrafilters, non-principal \emph{idempotents} are of great importance, see \cite{Hindman Strauss,Saveliev(idempotents)}. It is well known that among Ramsey ultrafilters~$\mathfrak u$ there are no one such that $\mathfrak u+ \mathfrak u=\mathfrak u$ or $\mathfrak u\cdot \mathfrak u=\mathfrak u$. It can be shown that this property of Ramsey ultrafilters extends to any function $f:\omega^n\to\omega$, except for trivial cases.

\begin{prop} Let $\mathfrak u$ be a Ramsey ultrafilter on $\mathscr P(\omega)$, and let $f:\omega^n\to\omega$, $1\leq n<\omega$. Then $\widetilde f(\mathfrak u, \mathfrak u, \ldots, \mathfrak u)=\mathfrak u$ if and only if there are $X\in \mathfrak u$ and $i<n$ such that
$$
f(x_0, x_1, \ldots, x_{n-1})=x_i
$$
for all $x_0< x_1<\ldots<x_{n-1}\in X$.
\end{prop}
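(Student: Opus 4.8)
The plan is to prove the two implications separately. The implication ``$\Leftarrow$'' is immediate from Proposition~\ref{Prop1}: if $X\in\mathfrak u$ and $i<n$ satisfy $f(x_0,\ldots,x_{n-1})=x_i$ for all $x_0<\ldots<x_{n-1}$ in $X$, I would apply Proposition~\ref{Prop1} with $g=\mathrm{id}_\omega$, $k=n$, $\bm p=n$ (so $\bm p_{[j]}=j$) and $\bm q=\{i\}$; its hypothesis then reads precisely $f(x_0,\ldots,x_{n-1})=\mathrm{id}(x_i)=x_i$ on $X$, so $\widetilde f(\mathfrak u,\ldots,\mathfrak u)=\widetilde{\mathrm{id}}(\mathfrak u)$, and $\widetilde{\mathrm{id}}(\mathfrak u)=\{S:(\forall X\in\mathfrak u)\,X\cap S\neq\emptyset\}=\mathfrak u$ since $\mathfrak u$ is an ultrafilter.

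For ``$\Rightarrow$'', assume $\widetilde f(\mathfrak u,\ldots,\mathfrak u)=\mathfrak u$. Since $\mathfrak u$ is Ramsey, the implication $1\Rightarrow3$ of Theorem~\ref{Th2} provides a set $X\in\mathfrak u$ on which $f$ is either upward constant or selectively upward injective with respect to some non-empty $I\subseteq n$. The first alternative is ruled out: Proposition~\ref{Prop1} with $g$ a nullary constant ($m=0$, $\bm q=\emptyset$, $\bm p=n$) would make $\widetilde f(\mathfrak u,\ldots,\mathfrak u)$ principal, contradicting $\widetilde f(\mathfrak u,\ldots,\mathfrak u)=\mathfrak u$. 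So $f$ is selectively upward injective on $X$ with respect to some $I$ with $m:=|I|\geq 1$, and the computation carried out in the proof of the implication $3\Rightarrow4$ of Theorem~\ref{Th2} yields $\widetilde f(\mathfrak u,\ldots,\mathfrak u)\approx_\mathrm{RK}\underbrace{\mathfrak u\times\ldots\times\mathfrak u}_{m\ \text{times}}$, whence $\mathfrak u\approx_\mathrm{RK}\underbrace{\mathfrak u\times\ldots\times\mathfrak u}_{m\ \text{times}}$.

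The key step is to force $m=1$, and this is the place I expect to be the main obstacle. For $m\geq2$, unwinding the recursive definition of the extension of a bijection $\omega^m\to\omega$ shows that $\underbrace{\mathfrak u\times\ldots\times\mathfrak u}_{m\ \text{times}}$ is $\approx_\mathrm{RK}$-equivalent to a Fubini product $\mathfrak u\otimes\mathfrak v$ with $\mathfrak v$ non-principal (one may take $\mathfrak v\approx_\mathrm{RK}\underbrace{\mathfrak u\times\ldots\times\mathfrak u}_{(m-1)\ \text{times}}$). A Fubini product of two non-principal ultrafilters is never a $P$-point, as witnessed by the partition of the index set into the columns $\{k\}\times\omega$ (see~\cite{Hindman Strauss}); on the other hand a Ramsey ultrafilter is a $P$-point and the $P$-point property is invariant under $\approx_\mathrm{RK}$ (see~\cite{Comfort Negrepontis}). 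Hence $\mathfrak u\approx_\mathrm{RK}\underbrace{\mathfrak u\times\ldots\times\mathfrak u}_{m\ \text{times}}$ is impossible for $m\geq 2$, and $m=1$. This $P$-point argument (a selectivity argument would serve equally well) is the only genuinely non-elementary input; the fixed-point fact used below is also classical but has an elementary proof.

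It remains to extract the conclusion from $I=\{i\}$. On $X$ the value $f(x_0,\ldots,x_{n-1})$ then depends injectively only on the coordinate $x_i$; put $X'=X\setminus\{X_{[0]},\ldots,X_{[i-1]}\}$, which is in $\mathfrak u$ because $\mathfrak u$ is non-principal, so that every $x\in X'$ is the $i$-th entry of some increasing $n$-tuple from $X$, and define $h(x)=f(x_0,\ldots,x_{i-1},x,x_{i+1},\ldots,x_{n-1})$ for any such tuple. By the defining property of $I=\{i\}$ (all completions lie in $X$), the function $h$ is well defined and injective on $X'$. Extending $h$ to a map $\omega\to\omega$ and applying Proposition~\ref{Prop1} once more ($g=h$, $k=n$, $\bm p=n$, $\bm q=\{i\}$) gives $\widetilde h(\mathfrak u)=\widetilde f(\mathfrak u,\ldots,\mathfrak u)=\mathfrak u$. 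Finally I would invoke the classical fact that $\widetilde h(\mathfrak u)=\mathfrak u$ for a non-principal $\mathfrak u$ implies $\{x:h(x)=x\}\in\mathfrak u$: otherwise $\{x:h(x)\neq x\}\in\mathfrak u$, and a $3$-coloring of the functional graph of $h$ in which $x$ and $h(x)$ always receive different colors yields a color class $B\in\mathfrak u$ with $h(B)\cap B=\emptyset$, contradicting $h(B)\in\widetilde h(\mathfrak u)=\mathfrak u$. Intersecting with $X'$ produces $Y=\{x\in X':h(x)=x\}\in\mathfrak u$, and since $Y\subseteq X$ we get $f(x_0,\ldots,x_{n-1})=h(x_i)=x_i$ for every increasing tuple $x_0<\ldots<x_{n-1}$ from $Y$, as required.
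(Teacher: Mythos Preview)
Your proof is correct and follows essentially the same approach as the paper: apply Theorem~\ref{Th2} to obtain selective upward injectivity with respect to some $I$, rule out $|I|=0$ via Proposition~\ref{Prop1} and $|I|\geq 2$ via an RK-argument, reduce to a unary $h$ with $\widetilde h(\mathfrak u)=\mathfrak u$ through Proposition~\ref{Prop1}, and invoke the classical fixed-point fact. The only cosmetic difference is in how $|I|\geq 2$ is excluded---the paper cites the strict inequality $\mathfrak u<_{\mathrm{RK}}\mathfrak u\times\mathfrak v$ for non-principal $\mathfrak u,\mathfrak v$ from~\cite{Hindman Strauss}, whereas you argue via the equivalent observation that a Fubini product of non-principal ultrafilters is not a $P$-point.
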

\begin{proof} 
%We will use Theorem \ref{Th2} and, in addition, repeat some arguments of its proof. 
Let $\widetilde f(\mathfrak u, \mathfrak u, \ldots, \mathfrak u)=\mathfrak u$, and let $Y\in \mathfrak u$ and $I\subseteq n$ be such that $f$ is selectively upward injective on $Y$ w.r.t.~$I$. \par If $|I|=0$ we have $\widetilde f(\mathfrak u, \mathfrak u, \ldots, \mathfrak u)\in \omega$ by Proposition \ref{Prop1}, a contradiction. \par Let $|I|\geq 1$. Then $$\widetilde f(\mathfrak u, \mathfrak u, \ldots, \mathfrak u)\approx_\mathrm{RK} \underbrace{\mathfrak u\times \mathfrak u\times\ldots\times\mathfrak u}_{\text{$m$ times}}$$
by Theorem \ref{Th2}. If $m\geq 2$, we come to a contradiction again because $\mathfrak u<_{\mathrm{RK}} \mathfrak u\times\mathfrak v$ for all $\mathfrak u, \mathfrak v\in \bm\beta\omega\setminus\omega$ where ${<_{\mathrm{RK}}}={\leq_{\mathrm{RK}}}\setminus {\leq^{-1}_{\mathrm{RK}}}$, see~\cite{Hindman Strauss}. Therefore, $I=\{i\}$ for some $i<n$. As in the proof of Theorem \ref{Th2}, let us construct a function $g:\omega\to\omega$ such that
$f(x_0, x_1, \ldots, x_{n-1})=g(x_i)$ for all $x_0<x_1<\ldots<x_{n-1}\in Y$. We have $\widetilde f(\mathfrak u, \mathfrak u, \ldots, \mathfrak u)=\widetilde g(\mathfrak u)=\mathfrak u$ by Proposition~\ref{Prop1}. Therefore, there is a set $Z\in \mathfrak u$ such that $g(x)=x$ for all $x\in Z$, see~\cite{Hindman Strauss}. So, for all $x_0< x_1< \ldots< x_{n-1}\in Y\cap Z$ we have
$$
f(x_0, x_1, \ldots, x_{n-1})=x_i.
$$
\par In the opposite direction, the proposition immediately follows from Proposition \ref{Prop1}.
\end{proof}

\paragraph*{Discussion.} The shortest and most elegant proof of the Ramsey Theorem uses the ultrafilter technique, see \cite{Graham2015}. Can Theorem \ref{main_var} (or the Canonical Ramsey Theorem) be proved in a similar way? 

\paragraph*{Acknowledgment.} The author thanks Prof. D. I. Saveliev for fruitful discussions.

\end{document}